\documentclass[10pt]{amsart}
\pdfoutput=1
\usepackage[utf8]{inputenc}
\usepackage[T1]{fontenc}
\usepackage{amssymb}
\usepackage[dvipsnames,svgnames,x11names,hyperref]{xcolor}
\usepackage{amsmath, amsfonts,amsthm}
\usepackage[colorlinks=true]{hyperref}
\usepackage[all]{xy}
\usepackage{xypic}
\numberwithin{equation}{subsection}
\theoremstyle{plain}
\newtheorem{theorem}[subsection]{Theorem}
\theoremstyle{definition}
\newtheorem{definition}[subsection]{Definition}

\newtheorem*{notation*}{Notation}
\theoremstyle{plain}
\newtheorem{proposition}[subsection]{Proposition}
\newtheorem{lemma}[subsection]{Lemma}
\theoremstyle{corollary}

\theoremstyle{definition}
\theoremstyle{remark}
\usepackage{geometry}
\fontfamily{ptm}
\selectfont
\usepackage{mathrsfs}
\usepackage{tikz-cd}
\usepackage[utf8]{inputenc}
\usepackage{amsmath}
\usepackage{amsfonts}
\usepackage{amssymb}
\usepackage{graphicx}
\usepackage[T1]{fontenc}
\usepackage{amsthm}
\usepackage{enumitem}
\usepackage{graphicx,nicefrac}
\usepackage[all]{xy}
\usepackage{mathtools}
\usepackage{xcolor}
\usepackage{tikz}
\usepackage{tikz-cd}
\usepackage{mathrsfs}
\tikzstyle{line}=[draw]

\linespread{1.2}

\title{New description of perverse sheaves on a disc}
\author{Krystian Olechowski}
\address{}
\email{}
\begin{document}
\hypersetup{linkcolor=Teal}
\hypersetup{citecolor=Teal}
\hypersetup{urlcolor=Teal}

\bibliographystyle{alpha}

\begin{abstract}
	 There is a connection between the category of perverse sheaves on a disc and different notions related to spherical functors. We introduce a category whose objects are analogous to 4-periodic semiorthogonal decompositions and prove that it is equivalent to the category of perverse sheaves on a disc stratified by the origin and its complement.
\end{abstract}

\maketitle

\section{Introduction}

A. Beilinson proved in \cite{Bei} that the category of perverse sheaves on a disc stratified by the origin and its complement (denoted by 
$\mathsf{Perv}(D)$) is equivalent to the category of pairs of vector spaces together with two composable linear transformations $u$ and $v$ between them satisfying the condition that $1 - uv$ is an isomorphism. We will denote this category as $\mathcal{A}_1$.

In particular, this description of $\mathsf{Perv}(D)$ gave rise to the connection between perverse sheaves and spherical functors. Spherical twists were first introduced by P. Seidel and R. Thomas in \cite{st}. The notion of a spherical functor was later scrutinized by R. Anno and T. Logvinenko in \cite{al}. To properly define the spherical functor, they used the language of DG categories. Roughly, the definition is as follows.

Let $\mathcal{A}, \mathcal{B}$ be DG categories and $s: \mathcal{D}(\mathcal{A}) \to \mathcal{D}(\mathcal{B})$ be a functor 
between their derived categories. Assume that $s$ has both adjoints, $l \dashv s \dashv r$. The adjunction unit and counit yield two distinguished triangles of functors:
\begin{equation} \label{sphericaltriangles}
\begin{aligned}
sr \to Id \to t, \\
f \to Id \to rs
\end{aligned}
\end{equation}
for some functors $f : \mathcal{D}(\mathcal{A}) \to \mathcal{D}(\mathcal{A})$ and $t : \mathcal{D}(\mathcal{B}) \to \mathcal{D}(\mathcal{B})$.
We say that $s$ is spherical if $f$ and $t$ are equivalences of categories.

By considering the triangles (\ref{sphericaltriangles}) in the Grothendieck group one can see that relations defining
spherical functors say precisely that $Id - sr$ and $Id - rs$ are equivalences of categories.

This description shows a direct relation between spherical functors and the category $\mathcal{A}_1$ - vector spaces are replaced by categories and linear transformations by functors. This relation first appeared in the work of M. Kapranov and V. Schechtman (\cite{schobers}).

They introduced the category $\mathcal{A}_2$ which is equivalent to $\mathcal{A}_1$ (see \cite{KS}). It is defined as a category of triplets of vector spaces together with some linear transformations between them. Once more, one can replace vector spaces and linear transformations by categories and functors, and get the notion of a spherical pair. Its definition is purely triangulated. Given DG enhancements, one can produce two spherical functors from a spherical pair, adjoint to each other.

There exists another notion related to spherical functors, the so-called 4-periodic semiorthogonal decomposition (see \cite{Bodz}). It consists of four subcategories forming appropriate semiorthogonal decompositions. Once more, this definition does not use DG categories and such 4-periodic semiorthogonal decomposition yields a spherical pair.

D. Halpern-Leistner and I. Schipman proved in \cite{HLS} that any spherical functor yields a 4-periodic semiorthogonal decomposition. The ambient category was constructed via DG gluing along a DG functor.  A natural question whether the notions of spherical pair and 4-periodic semiorthogonal decomposition are equivalent in the realm of triangulated categories remains open. The theorem proved in this paper suggests that the answer to this question is positive.

We introduce the category $\mathcal{C}$ related to the notion of a 4-periodic semiorthogonal decomposition in a similar manner as categories $\mathcal{A}_1, \mathcal{A}_2$ are connected to spherical functors and spherical pairs. It is a category of vector spaces together with four subspaces whose appropriate direct sums are isomorphic to the original space. We prove the following theorem.

\begin{theorem}
	The category $\mathcal{C}$ is equivalent to the category $\mathsf{Perv}(D)$.
\end{theorem}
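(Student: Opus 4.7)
The plan is to leverage Beilinson's equivalence \cite{Bei} $\mathsf{Perv}(D) \simeq \mathcal{A}_1$ and reduce the theorem to constructing a direct equivalence $\mathcal{C} \simeq \mathcal{A}_1$. The geometric motivation is the same as in the passage from a spherical functor to a 4-periodic semiorthogonal decomposition: the two ``diagonal'' components of the SOD should arise as graphs of the structure maps $u,v$ of the Beilinson quiver.

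Concretely, I would define a functor $G : \mathcal{A}_1 \to \mathcal{C}$ as follows. Given $(\Phi, \Psi, u, v)$ with $1 - uv$ an isomorphism, set $V = \Phi \oplus \Psi$ and
\begin{align*}
V_1 &= \Phi \oplus 0, & V_3 &= \{(\phi, -u(\phi)) : \phi \in \Phi\}, \\
V_2 &= 0 \oplus \Psi, & V_4 &= \{(-v(\psi), \psi) : \psi \in \Psi\}.
\end{align*}
The decompositions $V = V_1 \oplus V_2 = V_2 \oplus V_3 = V_4 \oplus V_1$ are immediate, while a short computation shows that $V = V_3 \oplus V_4$ is equivalent to $1 - uv$ being invertible. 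In the other direction, $F : \mathcal{C} \to \mathcal{A}_1$ sends $(V, V_1, \ldots, V_4)$ to $(V_1, V_2, u, v)$, where $u : V_1 \to V_2$ is the projection $V \to V_2$ along $V_3$ restricted to $V_1$ and $v : V_2 \to V_1$ is the projection $V \to V_1$ along $V_4$ restricted to $V_2$; the Beilinson condition $1 - vu \in \mathrm{Aut}(V_1)$ is then read off from the decomposition $V = V_3 \oplus V_4$.

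The main technical point is showing that $F$ and $G$ are mutually quasi-inverse. The composition $FG$ reproduces the original Beilinson data after a careful sign check and is pure linear algebra involving the graphs. The more delicate direction is $GF \simeq \mathrm{id}_{\mathcal{C}}$: given an object $(V, V_1, \ldots, V_4)$ of $\mathcal{C}$, the decomposition $V = V_1 \oplus V_2$ furnishes a canonical isomorphism $V \cong V_1 \oplus V_2$, and one must check that under this isomorphism $V_3$ and $V_4$ become precisely the graphs described above. This is the one step that uses all four decomposition identities in a coordinated way, rather than two at a time as in the definition of $F$, and I expect it to be the main obstacle. Once done, functoriality and naturality are routine: morphisms in $\mathcal{C}$ are linear maps preserving each $V_i$, which translates directly into pairs of maps intertwining the Beilinson data, and conversely.
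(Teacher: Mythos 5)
Your proposal is correct, but it takes a genuinely different route from the paper. The paper never passes through Beilinson's category $\mathcal{A}_1$: it works with the Kapranov--Schechtman category $\mathcal{A}_2$ of triples $(E_-,E_0,E_+)$ and defines $T(E_-,E_0,E_+)=(E_0,\delta_-(E_-),\delta_+(E_+),\ker\gamma_-,\ker\gamma_+)$, so the ambient space of the resulting $\mathcal{C}$-object is the middle space $E_0$ itself and the four subspaces are images and kernels of the structure maps; the quasi-inverse $S$ sends $(V,A_1,A_2,B_1,B_2)$ to $A_1 \rightleftarrows V \rightleftarrows A_2$ built from inclusions and the projections along $B_1,B_2$. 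This makes $TS$ the identity on the nose, reduces $ST\cong\mathrm{id}$ to a short natural isomorphism, and mirrors the intended analogy between spherical pairs and $4$-periodic semiorthogonal decompositions. Your construction instead assembles the ambient space as $\Phi\oplus\Psi$ and realizes two of the four subspaces as graphs of $-u$ and $-v$; this is the linear-algebra shadow of the gluing construction of \cite{HLS}, it connects $\mathcal{C}$ directly to the spherical-functor (rather than spherical-pair) picture, and the signs you chose do make $FG=\mathrm{id}$ exactly. Two details to nail down in a write-up: (i) fix the dictionary between your cyclically indexed $V_1,\dots,V_4$ and the paper's $(A_1,A_2,B_1,B_2)$ --- namely $A_1=V_1$, $B_1=V_2$, $A_2=V_3$, $B_2=V_4$, so that the four required decompositions $A_i\oplus B_j$ are exactly your four adjacent sums, while $V_1+V_3$ and $V_2+V_4$ are not required to be direct; and (ii) in the definition of $F$, invertibility of $1-vu$ is not purely formal in infinite dimensions, so one should record the identity $(1-vu)(x)=\bigl(x-u(x)\bigr)+\bigl(u(x)-vu(x)\bigr)$, whose first summand lies in $V_3$ and second in $V_4$; injectivity and surjectivity of $1-vu$ then both follow from $V=V_3\oplus V_4$ combined with $V=V_1\oplus V_2$ and $V=V_1\oplus V_4$. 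With those points made explicit, your argument is a complete and valid alternative proof.
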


Our proof is based on the equivalences mentioned above, in fact we construct quasi-inverse equivalences $S : \mathcal{C} \xrightarrow{\simeq} \mathcal{A}_2$ and $T : \mathcal{A}_2 \xrightarrow{\simeq}\mathcal{C}$.

\section*{Acknowledgements}
	This paper is based on the author's master thesis written under the supervision of Agnieszka Bodzenta. The author was partially supported by the Polish National Science Centre Grant No. 2018/31/D/STI/03375.
	
\section{Preliminaries}
	Throughout this paper we are working over the fixed field $\mathbf{k}$. 
	
	Let $D$ be a two dimensional real disc. We fix a stratification of $D$ by the origin $0$ and its complement $U = D \setminus \{ 0 \}$, and we fix the middle perversity, i.e. a function $p : \{ 0, U \} \to \mathbf{Z}$ such that $p(0) = 0, \ p(U) = -1$. Let $\mathcal{D}(D)$ be the derived category of the category of sheaves of $\mathbf{k}$-vector spaces on $D$ and denote by $i_{0}, i_{U}$ the inclusions of both strata. Recall that a complex of sheaves is called \emph{constructible} with respect to the given stratification if its cohomology sheaves are locally constant after the restriction to each stratum.
	 
	\begin{definition}
		The category $\mathsf{Perv}(D)$ of perverse sheaves on $D$ with respect to the above stratification and the perversity function consists of objects $\mathcal{F} \in \mathcal{D}(D)$ such that:
		\begin{enumerate}
			\item $\mathcal{F}$ is constructible,
			\item $\mathcal{H}^n(i_0^*(\mathcal{F})) = 0 \text{ for } n > 0$,
			\item $\mathcal{H}^n(i_{0}^!(\mathcal{F})) = 0 \text{ for } n < 0$,
			\item $\mathcal{H}^n(i_{U}^*(\mathcal{F})) = 0 \text{ for } n > -1$,
			\item $\mathcal{H}^n(i_{U}^!(\mathcal{F})) = 0 \text{ for } n < -1$.
		\end{enumerate}
	\end{definition}

	The general definition of the category of perverse sheaves on an arbitrary stratified topological space can be found in \cite{faisceaux}.
	
	Let $\mathsf{Vect}$ be the category of $\mathbf{k}$-vector spaces. We define categories $\mathcal{A}_2$ and $\mathcal{C}$ as:
	
	\begin{definition}
		Objects of the category $\mathcal{A}_2$ are:
		\begin{equation*}
		\begin{aligned}
		&Obj(\mathcal{A}_2) \coloneqq &&\{
		\begin{tikzcd} [ampersand replacement=\&]
		{E_-} \& {E_0} \& {E_+}
		\arrow["{\delta_-}"', shift right=2, from=1-1, to=1-2]
		\arrow["{\gamma_-}"', shift right=2, from=1-2, to=1-1]
		\arrow["{\gamma_+}"', shift right=2, from=1-2, to=1-3]
		\arrow["{\delta_+}"', shift right=2, from=1-3, to=1-2]
		\end{tikzcd}
		: E_i \in \mathsf{Vect}, \ \gamma_{+}\delta_{+} = 1_{E_+} , \  \gamma_{-}\delta_{-} = 1_{E_-} , \\
		&&&\gamma_{-}\delta_{+} \ \text{is an isomorphism} , \gamma_{+}\delta_{-} \ \text{is an isomorphism} \}.
		\end{aligned}
		\end{equation*}
		We will often denote such an object as $(E_-, E_0, E_+)$.
		
		Morphisms in $\mathcal{A}_2$ are triplets of linear transformations such that all squares in the following diagram commute
		\begin{equation} \label{mor2}
		\begin{tikzcd} 
		{E_-} & {E_0} & {E_+} \\ \\
		{F_-} & {F_0} & {F_+}
		\arrow["{\delta_-}"', shift right=2, from=1-1, to=1-2]
		\arrow["{\gamma_+}"', shift right=2, from=1-2, to=1-3]
		\arrow["{\eta_-}"', shift right=2, from=3-1, to=3-2]
		\arrow["{\xi_+}"', shift right=2, from=3-2, to=3-3]
		\arrow["{\delta_+}"', shift right=2, from=1-3, to=1-2]
		\arrow["{\gamma_-}"', shift right=2, from=1-2, to=1-1]
		\arrow["{\eta_+}"', shift right=2, from=3-3, to=3-2]
		\arrow["{\xi_-}"', shift right=2, from=3-2, to=3-1]
		\arrow["{e_-}"', from=1-1, to=3-1]
		\arrow["{e_0}"', from=1-2, to=3-2]
		\arrow["{e_+}", from=1-3, to=3-3]
		\end{tikzcd}
		\end{equation}
		(such a morphism will be denoted as $(e_-, e_0, e_+)$).
	\end{definition}
	
	\begin{definition} 
		Objects of the category $\mathcal{C}$ are:
		\[
		Obj(\mathcal{C}) \coloneqq \{ (V, A_1 , A_2, B_1, B_2) : \ V \in \mathsf{Vect}, \ A_i, B_i \subseteq V , \ A_i \oplus B_j \simeq V \text{ for all } i, j = 1,2 \}.
		\]
		Such an object will be denoted as $V_{A, B}$.

		Morphisms in $\mathcal{C}$ are linear transformations that preserve the structure:
		\begin{equation*}
		\begin{aligned}
		Hom_{\mathcal{C}}(V_{A, B}, W_{X, Y}) = \{ \varphi \in Hom_{\mathsf{Vect}}(V, W) :
		\varphi(A_i) \subseteq X_i,	\varphi(B_i) \subseteq Y_i \}.	
		\end{aligned}
		\end{equation*}
	\end{definition}
	
	\begin{proposition} \cite{KS}
		The category $\mathcal{A}_2$ is equivalent to the category $\mathsf{Perv}(D)$.
	\end{proposition}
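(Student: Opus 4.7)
The plan is to reduce the claim to Beilinson's theorem, which identifies $\mathsf{Perv}(D)$ with the category $\mathcal{A}_1$ of quadruples $(\Phi, \Psi, u, v)$ where $u \colon \Psi \to \Phi$, $v \colon \Phi \to \Psi$, and $1 - uv$ (equivalently $1 - vu$) is an automorphism. It then suffices to construct quasi-inverse functors $F \colon \mathcal{A}_1 \to \mathcal{A}_2$ and $G \colon \mathcal{A}_2 \to \mathcal{A}_1$ explicitly at the level of vector spaces.

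To define $F$, I would send $(\Phi, \Psi, u, v)$ to the triple $(E_-, E_0, E_+) = (\Psi,\, \Psi \oplus \Phi,\, \Psi)$ equipped with $\delta_-(\psi) = (\psi, 0)$, $\gamma_-(\psi, \phi) = \psi - v(\phi)$, $\delta_+(\psi) = (\psi, u(\psi))$, and $\gamma_+(\psi, \phi) = \psi$. A direct check gives $\gamma_\pm \delta_\pm = \mathrm{id}_\Psi$, $\gamma_+ \delta_- = \mathrm{id}_\Psi$, and $\gamma_- \delta_+ = 1 - vu$, so the image lies in $\mathcal{A}_2$. A morphism $(a \colon \Phi \to \Phi',\, b \colon \Psi \to \Psi')$ in $\mathcal{A}_1$ is sent to the triple $(b,\, b \oplus a,\, b)$, and the required commutativities reduce to the compatibility of $(a, b)$ with $u$ and $v$.

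For the inverse $G$, the crucial observation is that the axioms of $\mathcal{A}_2$ provide canonical direct sum decompositions of $E_0$: since $\gamma_- \delta_- = \mathrm{id}$, one has $E_0 = \delta_-(E_-) \oplus \ker(\gamma_-)$, and since $\gamma_+ \delta_-$ is an isomorphism, one also has $E_0 = \delta_-(E_-) \oplus \ker(\gamma_+)$. I would set $\Psi := E_-$ (identified with $\delta_-(E_-)$ via $\delta_-$) and $\Phi := \ker(\gamma_+) \subset E_0$, giving $E_0 \simeq \Psi \oplus \Phi$. In this decomposition $\delta_+(E_+)$ is complementary to $\Phi$ and $\ker(\gamma_-)$ is complementary to $\Psi$, so each is the graph of a unique linear map, producing $u \colon \Psi \to \Phi$ and $v \colon \Phi \to \Psi$. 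A short computation then shows $\gamma_-(\psi, \phi) = \psi - v(\phi)$ in these coordinates, whence $\gamma_- \delta_+ = 1 - vu$ is an isomorphism, placing $(\Phi, \Psi, u, v)$ in $\mathcal{A}_1$.

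Finally, the composite $G \circ F$ is the identity on objects by direct inspection, and $F \circ G$ admits a natural isomorphism to $\mathrm{id}_{\mathcal{A}_2}$ whose component at $(E_-, E_0, E_+)$ is given by the triple $(\mathrm{id}_{E_-},\, (\psi, \phi) \mapsto \delta_-(\psi) + \phi,\, \gamma_+ \delta_-)$. The main obstacle I anticipate is verifying naturality: one must show that a morphism $(e_-, e_0, e_+)$ in $\mathcal{A}_2$ automatically preserves all four canonical subspaces of $E_0$ (namely $\delta_\pm(E_\pm)$ and $\ker(\gamma_\pm)$) and that $e_0$ decomposes consistently with the $\Psi \oplus \Phi$ splitting. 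Both facts follow from the commutativity squares in \eqref{mor2}, after which the remaining checks are routine diagram chases in the category of vector spaces.
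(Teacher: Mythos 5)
The paper does not actually prove this proposition: it is imported wholesale from \cite{KS} and used as a black box, so there is no argument in the text to compare yours against. Your proposal supplies a genuine proof, and it is correct. Reducing to Beilinson's description $\mathsf{Perv}(D)\simeq\mathcal{A}_1$ and building an explicit equivalence $\mathcal{A}_1\simeq\mathcal{A}_2$ is a sound strategy, and the constructions check out: for $F$ one gets $\gamma_-\delta_-=\gamma_+\delta_+=\gamma_+\delta_-=\mathrm{id}$ and $\gamma_-\delta_+=1-vu$; for $G$, the axiom $\gamma_+\delta_-$ iso gives $E_0=\delta_-(E_-)\oplus\ker(\gamma_+)$, while $\gamma_\pm\delta_\pm=\mathrm{id}$ makes $\delta_+(E_+)$ a complement of $\ker(\gamma_+)$ and $\ker(\gamma_-)$ a complement of $\delta_-(E_-)$, so both are graphs of unique linear maps $u$ and $v$ as you claim. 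Two points deserve an explicit sentence in a final write-up. First, the identity $\gamma_-\delta_+=1-vu$ holds only after identifying $E_+$ with $\Psi$ via the isomorphism $x\mapsto(\gamma_+\delta_-)^{-1}(x)$; unwinding, $\gamma_-\delta_+=(1-vu)\circ(\gamma_+\delta_-)^{-1}$, which is an isomorphism if and only if $1-vu$ is, so the conclusion survives, but the equation as literally written compares maps with different sources. Second, you silently use that $1-uv$ is invertible if and only if $1-vu$ is; this is standard but should be stated, since the paper phrases $\mathcal{A}_1$ with $1-uv$. It is worth noting that the four subspaces $\delta_\pm(E_\pm)$ and $\ker(\gamma_\pm)$ and the direct-sum decompositions you extract from the $\mathcal{A}_2$ axioms are exactly the ingredients the paper later uses to define the functor $T$ and to prove it is well defined, so your argument is very much in the spirit of, and partially duplicates, the computations carried out elsewhere in the text.
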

	
\section{Equivalence}
	We define two functors between $\mathcal{A}_2$ and $\mathcal{C}$ and prove that they are quasi-inverse equivalences.
	\begin{definition}
		Let us define a functor $S: \mathcal{C} \to \mathcal{A}_2$ on objects as:
		\[
		S(V_{A, B}) = \begin{tikzcd} [ampersand replacement=\&]
		{A_1} \& {V} \& {A_2}
		\arrow["{i_1}"', shift right=2, from=1-1, to=1-2]
		\arrow["{\pi_1}"', shift right=2, from=1-2, to=1-1]
		\arrow["{\pi_2}"', shift right=2, from=1-2, to=1-3]
		\arrow["{i_2}"', shift right=2, from=1-3, to=1-2]
		\end{tikzcd}
		\]
		where $i_1, i_2$ denote the inclusions of subspaces and $\pi_k$ are the projections $V \simeq A_k \oplus B_k \to A_k$ with kernels $B_k$.
		
		Let $W_{X, Y}$ be another object in $\mathcal{C}$ with morphisms $j_k, \rho_k$ such as $i_k, \pi_k$ above. Let $\varphi: V_{A, B} \to W_{X, Y}$ be a morphism in $\mathcal{C}$. We define $S(\varphi)$ as:
		\begin{equation} \label{Smor}
		\begin{tikzcd} 
		{A_1} & {V} & {A_2} \\ \\
		{X_1} & {W} & {X_2}
		\arrow["{i_1}"', shift right=2, from=1-1, to=1-2]
		\arrow["{\pi_2}"', shift right=2, from=1-2, to=1-3]
		\arrow["{j_1}"', shift right=2, from=3-1, to=3-2]
		\arrow["{\rho_2}"', shift right=2, from=3-2, to=3-3]
		\arrow["{i_2}"', shift right=2, from=1-3, to=1-2]
		\arrow["{\pi_1}"', shift right=2, from=1-2, to=1-1]
		\arrow["{j_2}"', shift right=2, from=3-3, to=3-2]
		\arrow["{\rho_1}"', shift right=2, from=3-2, to=3-1]
		\arrow["{\varphi \circ i_1}"', from=1-1, to=3-1]
		\arrow["{\varphi}"', from=1-2, to=3-2]
		\arrow["{\varphi \circ i_2}", from=1-3, to=3-3]
		\end{tikzcd}
		\end{equation}
	\end{definition}

	\begin{lemma}
		The functor $S$ is well-defined.
	\end{lemma}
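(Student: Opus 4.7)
The plan is to verify, in order: (a) that for every object $V_{A,B}$ of $\mathcal{C}$, the diagram $S(V_{A,B})$ satisfies the four relations defining an object of $\mathcal{A}_2$; (b) that for every morphism $\varphi$ in $\mathcal{C}$, the diagram (\ref{Smor}) commutes; and (c) that $S$ preserves identities and composition.

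For (a), the identities $\pi_1 i_1 = 1_{A_1}$ and $\pi_2 i_2 = 1_{A_2}$ are immediate from the construction of $\pi_k$ as the projection onto $A_k$ with respect to the decomposition $V = A_k \oplus B_k$. The substantive content is showing that the mixed compositions $\pi_1 i_2 : A_2 \to A_1$ and $\pi_2 i_1 : A_1 \to A_2$ are isomorphisms. I would argue injectivity of $\pi_1 i_2$ by noting that $\ker(\pi_1 i_2) = A_2 \cap B_1$, which vanishes because $V = A_2 \oplus B_1$. For surjectivity, given $a_1 \in A_1$, the same hypothesis lets me write $a_1 = a_2 + b_1$ with $a_2 \in A_2$ and $b_1 \in B_1$, whence $\pi_1 i_2(a_2) = \pi_1(a_1 - b_1) = a_1$. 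An identical argument with indices swapped, using $V = A_1 \oplus B_2$, handles $\pi_2 i_1$.

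For (b), I first observe that labelling the outer vertical arrows of (\ref{Smor}) by ``$\varphi \circ i_k$'' is shorthand for the corestriction to $X_k$, which is legitimate because $\varphi(A_k) \subseteq X_k$ by the definition of a morphism in $\mathcal{C}$. The two inclusion squares then commute tautologically, since both routes $A_k \to W$ equal $\varphi \circ i_k$. For the projection squares, I would write $v = a_k + b_k$ in $A_k \oplus B_k$; then $\varphi(v) = \varphi(a_k) + \varphi(b_k)$ with $\varphi(a_k) \in X_k$ and $\varphi(b_k) \in Y_k$ (using both morphism conditions), so $\rho_k(\varphi(v)) = \varphi(a_k) = (\varphi \circ i_k)(\pi_k(v))$, as required. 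Preservation of identities and composition under $S$ is then formal, the composition check on the outer factors reducing to $(\psi \circ \varphi) \circ i_k = \psi \circ (\varphi \circ i_k)$.

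The only genuinely nontrivial step is the isomorphism conclusion in (a), and its essential ingredient is that the direct sum assumption $A_i \oplus B_j \simeq V$ is imposed for all four pairs $(i,j)$, not only for $i = j$; without the mixed decompositions the functor would not land in $\mathcal{A}_2$. Everything else is a bookkeeping exercise.
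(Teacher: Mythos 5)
Your proposal is correct and follows essentially the same route as the paper: the same injectivity/surjectivity argument for the mixed compositions $\pi_1 i_2$ and $\pi_2 i_1$ using the cross decompositions $V \simeq A_i \oplus B_j$, and the same use of the decomposition $v = a_k + b_k$ for the projection squares (the paper phrases this as the operator identity $1 = i_k\pi_k + b_k\beta_k$ rather than elementwise, which is only a notational difference). Your added remarks on corestriction and on functoriality are fine but not substantively different from what the paper does.
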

	\begin{proof}
		To prove that $S$ is well-defined on objects, the only non-obvious calculations are whether $\pi_2 i_1$ and $\pi_1 i_2$ are isomorphisms. We will show that they are injective and surjective.
	
		Let us focus on $\pi_2 i_1$. Assume that $\pi_2 i_1 (x) = 0$ for some $x \in A_1$. By definition of $\pi_2$, this means that $i_1(x) \in B_2$. Since $V \simeq A_1 \oplus B_2$, an element $x \in A_1 \cap B_2$ has to be zero.
		Now, take any $y \in A_2$. Since $V \simeq A_1 \oplus B_2$, we can write $y = a_1 + b_2$ for unique $a_1 \in A_1, b_2 \in B_2$. Then, as $b_2 \in ker(\pi_2)$ and $\pi_2i_2 = 1$, we have $\pi_2 i_1(a_1) = \pi_2 (y - b_2) = y$. The proof for $\pi_1 i_2$ is analogous. Thus, $S$ is well-defined on objects.
		
		It remains to show that $S(\varphi)$ is indeed a morphism in $\mathcal{A}_2$. First of all, the left and right arrows are well-defined because $\varphi$ is a morphism in $\mathcal{C}$. Hence, we need to show that both squares in the diagram (\ref{Smor}) commute.
	
		Since $i_k, j_k$ are inclusions, we see that $j_k \varphi i_k = \varphi i_k$.
	
		Now, denote by $b_1$ and $y_1$ inclusions $B_1 \to A_1 \oplus B_1$ and $Y_1 \to X_1 \oplus Y_1$ and by $\beta_1, \gamma_1$ the projections $A_1 \oplus B_1 \to B_1$ and $X_1 \oplus Y_1 \to Y_1$. Then:
		\[
		\rho_1 \varphi = \rho_1 \varphi (i_1 \pi_1 + b_1 \beta_1) = \rho_1 \varphi i_1 \pi_1 + \rho_1 \varphi b_1 \beta_1 = \rho_1 j_1 \varphi i_1 \pi_1 + \rho_1 y_1 \varphi b_1 \beta_1 = \varphi i_1 \pi_1.
		\]
		Analogously, one shows that $\rho_2 \varphi = \varphi i_2 \pi_2$.
	\end{proof}

	\begin{definition}
		Let us define a functor $T: \mathcal{A}_2 \to \mathcal{C}$ on objects as:
		\[
		T(E_-, E_0, E_+) = (E_0, \delta_-(E_-), \delta_+(E_+), ker(\gamma_-), ker(\gamma_+)). 
		\]
	
		If $(e_-, e_0, e_+)$ is a morphism in $\mathcal{A}_2$ (as in (\ref{mor2})), we put $T(e_-, e_0, e_+) = e_0$.
	\end{definition}

	\begin{lemma}
		Functor $T$ is well-defined.
	\end{lemma}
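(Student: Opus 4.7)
The plan is to verify separately that $T$ is well-defined on objects and on morphisms. For objects, the main task is to establish the four direct sum decompositions $A_i \oplus B_j \simeq E_0$ required by the definition of $\mathcal{C}$. For morphisms, one must check that $e_0$ preserves the four distinguished subspaces, which reduces to unwinding the commuting squares in (\ref{mor2}).

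I would split the four direct sum decompositions into two groups. The ``matched'' decompositions $\delta_{\pm}(E_{\pm}) \oplus \ker(\gamma_{\pm}) \simeq E_0$ follow formally from $\gamma_{\pm}\delta_{\pm} = 1_{E_{\pm}}$: this makes $\delta_{\pm}\gamma_{\pm}$ an idempotent endomorphism of $E_0$, so every $x \in E_0$ decomposes as $x = \delta_{\pm}\gamma_{\pm}(x) + (x - \delta_{\pm}\gamma_{\pm}(x))$, and an element of $\delta_{\pm}(E_{\pm}) \cap \ker(\gamma_{\pm})$ is killed by applying $\gamma_{\pm}$ to a preimage in $E_{\pm}$. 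The ``crossed'' decompositions $\delta_{\mp}(E_{\mp}) \oplus \ker(\gamma_{\pm}) \simeq E_0$ use the hypothesis that $\gamma_{\pm}\delta_{\mp}$ is an isomorphism. Triviality of the intersection is immediate: any element of $\delta_{\mp}(E_{\mp}) \cap \ker(\gamma_{\pm})$ has a preimage in $E_{\mp}$ annihilated by the isomorphism $\gamma_{\pm}\delta_{\mp}$, hence zero. For spanning, I would explicitly set $a = (\gamma_{\pm}\delta_{\mp})^{-1}\gamma_{\pm}(x)$ and verify that $x - \delta_{\mp}(a)$ lies in $\ker(\gamma_{\pm})$.

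For morphisms, the required containments are $e_0(\delta_{\pm}(E_{\pm})) \subseteq \eta_{\pm}(F_{\pm})$ and $e_0(\ker(\gamma_{\pm})) \subseteq \ker(\xi_{\pm})$. The first follows from $e_0\delta_{\pm} = \eta_{\pm}e_{\pm}$, and the second from $\xi_{\pm}e_0 = e_{\pm}\gamma_{\pm}$, both of which are the commutativity of the corresponding squares in (\ref{mor2}). No step is genuinely hard; the only place requiring a small idea is the spanning part of the crossed decomposition, where the invertibility of $\gamma_{\pm}\delta_{\mp}$ must be invoked to produce the correct preimage.
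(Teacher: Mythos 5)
Your proposal is correct and follows essentially the same structure as the paper's proof: matched decompositions from the splittings $\gamma_{\pm}\delta_{\pm}=1$, crossed decompositions from the invertibility of $\gamma_{\pm}\delta_{\mp}$, and preservation of subspaces from the two commuting squares. Your spanning argument for the crossed case, taking $a=(\gamma_{\pm}\delta_{\mp})^{-1}\gamma_{\pm}(x)$ directly, is a slightly more streamlined route to the same decomposition that the paper reaches by first passing through the matched decomposition $v=\delta_{\mp}(v_{\mp})+k_{\mp}$.
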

	\begin{proof}
		To check that $T$ is well-defined on objects, we need to prove the appropriate decompositions of $E_0$ as direct sums. The computations that $E_0 \simeq \delta_-(E_-) \oplus ker(\gamma_-) \simeq \delta_+(E_+) \oplus ker(\gamma_+)$ are straightforward from the definition of $\mathcal{A}_2$.
		
		We show that $E_0 \simeq \delta_+(E_+) \oplus ker(\gamma_-)$. The proof for $\delta_-(E_-) \oplus ker(\gamma_+)$ is analogous. Let $x \in \delta_+(E_+) \cap ker(\gamma_-)$. Then $x = \delta_+(x')$ for some $x' \in E_+$. The condition $0 = \gamma_-(x) = \gamma_-(\delta_+(x'))$ implies that $x' = 0$ as $\gamma_-\delta_+$ is an isomorphism. Hence $x = 0$.
		
		Take any $v \in E_0$. We already know that we can express $v = \delta_-(v_-) + k_-$ with $v_- \in E_-, k_- \in ker(\gamma_-)$. Since $\gamma_-\delta_+$ is an isomorphism, we can find $v_+ \in E_+$ such that $v_- = \gamma_-\delta_+(v_+)$, i.e. $v = \delta_-(\gamma_- \delta_+ (v_+)) + k_-$.
		
		Denote by $i, \pi$ the inclusion and projection for $ker(\gamma_-)$ in the decomposition $E_0 \simeq \delta_-(E_-) \oplus ker(\gamma_-)$. Then we get $1 = \delta_-\gamma_- + i \pi$, i.e. $\delta_-\gamma_- = 1 - i\pi$ and
		\[
		v = (1 - i\pi)(\delta_+(v_+)) + k_- = \delta_+(v_+) - i\pi\delta_+(v_+) + k_-.
		\]
		The first summand lays in $\delta_+(E_+)$ and both second and third belong to $ker(\gamma_-)$. Hence, $E_0 = \delta_+(E_+) \oplus ker(\gamma_-)$.
		
		This proves that $T$ is well-defined on objects.
		
		It remains to show that $T(e_-, e_0, e_+) = e_0$ is a morphism in $\mathcal{C}$, i.e. that images of $\delta_-(E_-)$ and $ker(\gamma_-)$ are contained in appropriate subspaces (the images of $\delta_+(E_+)$ and $ker(\gamma_+)$ are proved analogously).
		
		From the definition of $(e_-, e_0, e_+)$ (see (\ref{mor2}) for notations) we get that
		\[
		e_0 \delta_- = \eta_- e_-,
		\]
		\[
		\xi_- e_0 = e_- \gamma_-.
		\]
		In particular, the second condition gives us immediately $e_0(ker(\gamma_-)) \subseteq ker(\xi_-)$.
		Let us compute:
		\[
		e_0(\delta_-(E_-)) = \eta_- e_-(E_-) \subseteq \eta_-(F_-).
		\]
		
		This shows that $T(e_-, e_0, e_+)$ is indeed a morphism in $\mathcal{C}$ and finishes the proof.
	\end{proof}
	
	We have two well-defined functors and we can state the main theorem.
	
	\begin{theorem}
		Functors $S$ and $T$ are quasi-inverse equivalences of categories.
	\end{theorem}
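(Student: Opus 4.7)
The plan is to show that one composition is the identity on the nose and the other is naturally isomorphic to the identity; together with fully-faithfulness this will establish the equivalence. Concretely, I will verify $T \circ S = \mathrm{Id}_{\mathcal{C}}$ and construct a natural isomorphism $\alpha : \mathrm{Id}_{\mathcal{A}_2} \xRightarrow{\simeq} S \circ T$.

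For $T \circ S$, unwind the definitions directly. Given $V_{A,B} \in \mathcal{C}$, the object $S(V_{A,B}) = (A_1, V, A_2)$ has structure maps $i_k$ (inclusions) and $\pi_k$ (projections along $B_k$), so $T(S(V_{A,B})) = (V, i_1(A_1), i_2(A_2), \ker \pi_1, \ker \pi_2) = V_{A,B}$. On a morphism $\varphi$, the functor $S$ produces a triple whose middle component is $\varphi$ (see diagram (\ref{Smor})), and $T$ extracts precisely the middle component. Hence $T \circ S = \mathrm{Id}_{\mathcal{C}}$.

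For $S \circ T$, the target $(S \circ T)(E_-, E_0, E_+) = (\delta_-(E_-), E_0, \delta_+(E_+))$ is only canonically isomorphic to $(E_-, E_0, E_+)$. Since $\gamma_\pm \delta_\pm = 1_{E_\pm}$, the map $\delta_\pm$ is injective and corestricts to an isomorphism $\bar\delta_\pm : E_\pm \xrightarrow{\simeq} \delta_\pm(E_\pm)$; set $\alpha_{(E_-,E_0,E_+)} := (\bar\delta_-, \mathrm{id}_{E_0}, \bar\delta_+)$. Checking it is a morphism in $\mathcal{A}_2$ amounts to four commutative squares. The two squares involving inclusions are immediate, since $i_k \circ \bar\delta_k = \delta_k$. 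The two squares involving the projections $\pi_k : E_0 \twoheadrightarrow \delta_k(E_k)$ (along $\ker \gamma_k$) reduce to the key identity $\pi_k = \bar\delta_k \circ \gamma_k$, which one verifies by decomposing $v \in E_0$ as $\delta_k(e) + r$ with $r \in \ker \gamma_k$, exactly as in the proof of Lemma\ (well-definedness of $T$). Once this is in hand, $\alpha_{(E_-,E_0,E_+)}^{-1} = (\bar\delta_-^{-1}, \mathrm{id}_{E_0}, \bar\delta_+^{-1})$ is an inverse, so each $\alpha$ is an isomorphism.

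Finally, naturality of $\alpha$ with respect to a morphism $(e_-, e_0, e_+) : (E_-, E_0, E_+) \to (F_-, F_0, F_+)$ reduces on the middle component to $e_0 = e_0$, and on the outer components to the identities $e_0 \circ \delta_\pm = \eta_\pm \circ e_\pm$, which are already built into the definition of a morphism in $\mathcal{A}_2$ (diagram (\ref{mor2})). The main obstacle in the proof is purely bookkeeping: identifying which $\delta_\pm(E_\pm)$ versus $E_\pm$ a given arrow lives between, and establishing the identity $\pi_k = \bar\delta_k \gamma_k$; once these identifications are fixed, every square commutes by direct unraveling.
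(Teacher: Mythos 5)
Your proposal is correct and follows essentially the same route as the paper: verify $T\circ S=\mathrm{Id}_{\mathcal{C}}$ on the nose, then build the natural isomorphism between $\mathrm{Id}_{\mathcal{A}_2}$ and $S\circ T$ with components $(\delta_-,\mathrm{id}_{E_0},\delta_+)$ (corestricted to the images) and inverse given by $\gamma_\pm$, the key point being that the projection onto $\delta_k(E_k)$ along $\ker\gamma_k$ is $\delta_k\gamma_k$. The paper's argument is the same, merely writing the projections as $\delta_\pm\gamma_\pm$ from the outset.
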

	\begin{proof}
		Firstly, we show that $TS(V_{A, B}) = V_{A, B}$:
		\[
		TS(V_{A, B}) =  T(\begin{tikzcd} [ampersand replacement=\&]
		{A_1} \& {V} \& {A_2}
		\arrow["{i_1}"', shift right=2, from=1-1, to=1-2]
		\arrow["{\pi_1}"', shift right=2, from=1-2, to=1-1]
		\arrow["{\pi_2}"', shift right=2, from=1-2, to=1-3]
		\arrow["{i_2}"', shift right=2, from=1-3, to=1-2]
		\end{tikzcd} ) = (V, i_1(A_1), i_2(A_2), ker(\pi_1), ker(\pi_2)) = (V, A_1, A_2, B_1, B_2),
		\]
		where the last equality follows from the definitions of $i_1, i_2, \pi_1$ and $\pi_2$.
		
		Hence, $TS$ is the identity functor, since the equality on morphisms is straightforward.
		
		Secondly, we show that $ST$ is naturally isomorphic to the identity functor.
		\[
		ST(E_-, E_0, E_+) = S(E_0, \delta_-(E_-), \delta_+(E_+), ker(\gamma_-), ker(\gamma_+)) = \begin{tikzcd} [ampersand replacement=\&]
		{\delta_-(E_-)} \& {E_0} \& {\delta_+(E_+)}
		\arrow["{i_-}"', shift right=2, from=1-1, to=1-2]
		\arrow["{\delta_-\gamma_-}"', shift right=2, from=1-2, to=1-1]
		\arrow["{\delta_+\gamma_+}"', shift right=2, from=1-2, to=1-3]
		\arrow["{i_+}"', shift right=2, from=1-3, to=1-2]
		\end{tikzcd} 
		\]
		where $i_-, i_+$ are subspace inclusions.
		
		We define a natural transformation $M: Id \to ST$. For $(E_-, E_0, E_+) \in \mathcal{A}_2$, $M(E_-, E_0, E_+)$ is given by:
		\[
		\begin{tikzcd} 
		{E_-} & {E_0} & {E_+} \\ \\
		{\delta_-(E_-)} & {E_0} & {\delta_+(E_+)}
		\arrow["{\delta_-}"', shift right=2, from=1-1, to=1-2]
		\arrow["{\gamma_+}"', shift right=2, from=1-2, to=1-3]
		\arrow["{i_-}"', shift right=2, from=3-1, to=3-2]
		\arrow["{\delta_+\gamma_+}"', shift right=2, from=3-2, to=3-3]
		\arrow["{\delta_+}"', shift right=2, from=1-3, to=1-2]
		\arrow["{\gamma_-}"', shift right=2, from=1-2, to=1-1]
		\arrow["{i_+}"', shift right=2, from=3-3, to=3-2]
		\arrow["{\delta_-\gamma_-}"', shift right=2, from=3-2, to=3-1]
		\arrow["{\delta_-}"', from=1-1, to=3-1]
		\arrow["{1}"', from=1-2, to=3-2]
		\arrow["{\delta_+}", from=1-3, to=3-3]
		\end{tikzcd}
		\]	
		It is obviously a morphism in $\mathcal{A}_2$. To show that $M$ is indeed a natural transformation we look at the following diagram:
		
		\[\begin{tikzcd}
		{E_-} & {E_0} & {E_+} && {\delta_-(E_-)} & {E_0} & {\delta_+(E_+)} \\
		\\
		{F_-} & {F_0} & {F_+} && {\eta_-(F_-)} & {F_0} & {\eta_+(F_+)}
		\arrow["{\delta_-}"', shift right=2, from=1-1, to=1-2]
		\arrow["{\gamma_-}"', shift right=2, from=1-2, to=1-1]
		\arrow["{\gamma_+}"', shift right=2, from=1-2, to=1-3]
		\arrow["{\delta_+}"', shift right=2, from=1-3, to=1-2]
		\arrow["{e_-}", from=1-1, to=3-1]
		\arrow["{e_0}", from=1-2, to=3-2]
		\arrow["{e_+}", from=1-3, to=3-3]
		\arrow["{M(E_-, E_0, E_+)}", from=1-3, to=1-5]
		\arrow["{M(F_-, F_0, F_+)}", from=3-3, to=3-5]
		\arrow["{i_-}"', shift right=2, from=1-5, to=1-6]
		\arrow["{\delta_-\gamma_-}"', shift right=2, from=1-6, to=1-5]
		\arrow["{j_-}"', shift right=2, from=3-5, to=3-6]
		\arrow["{\eta_-\xi_-}"', shift right=2, from=3-6, to=3-5]
		\arrow["{e_0 \circ i_+}", from=1-7, to=3-7]
		\arrow["{e_0 \circ i_-}", from=1-5, to=3-5]
		\arrow["{\xi_+}"', shift right=2, from=3-2, to=3-3]
		\arrow["{\eta_+}"', shift right=2, from=3-3, to=3-2]
		\arrow["{\eta_-}"', shift right=2, from=3-1, to=3-2]
		\arrow["{\xi_-}"', shift right=2, from=3-2, to=3-1]
		\arrow["{e_0}", from=1-6, to=3-6]
		\arrow["{\delta_+\gamma_+}"', shift right=2, from=1-6, to=1-7]
		\arrow["{i_+}"', shift right=2, from=1-7, to=1-6]
		\arrow["{\eta_+\xi_+}"', shift right=2, from=3-6, to=3-7]
		\arrow["{j_+}"', shift right=2, from=3-7, to=3-6]
		\end{tikzcd}\]
		
		It is commutative since:
		\begin{equation*}
		\begin{aligned}
		&e_0 i_- \delta_- = e_0 \delta_- = \eta_- e_-, \\
		&e_0 \ 1 = 1 \ e_0, \\
		&e_0 i_+ \delta_+ = e_0 \delta_+ = \eta_+ e_+.
		\end{aligned}
		\end{equation*}
		
		It remains to show that $M(E_-, E_0, E_+)$ is an isomorphism in $\mathcal{A}_2$.
		
		We claim that the following diagram defines its inverse $M(E_-, E_0, E_+)^{-1}$:
		\[
		\begin{tikzcd} 
		{E_-} & {E_0} & {E_+} \\ \\
		{\delta_-(E_-)} & {E_0} & {\delta_+(E_+)}
		\arrow["{\delta_-}"', shift right=2, from=1-1, to=1-2]
		\arrow["{\gamma_+}"', shift right=2, from=1-2, to=1-3]
		\arrow["{i_-}"', shift right=2, from=3-1, to=3-2]
		\arrow["{\delta_+\gamma_+}"', shift right=2, from=3-2, to=3-3]
		\arrow["{\delta_+}"', shift right=2, from=1-3, to=1-2]
		\arrow["{\gamma_-}"', shift right=2, from=1-2, to=1-1]
		\arrow["{i_+}"', shift right=2, from=3-3, to=3-2]
		\arrow["{\delta_-\gamma_-}"', shift right=2, from=3-2, to=3-1]
		\arrow["{\gamma_-}"', from=3-1, to=1-1]
		\arrow["{1}"', from=3-2, to=1-2]
		\arrow["{\gamma_+}", from=3-3, to=1-3]
		\end{tikzcd}
		\]	
		To check that it defines a morphism we compute:
		\[
		\gamma_-\delta_-\gamma_- = \gamma_-,
		\]
		\[
		\gamma_+\delta_+\gamma_+ = \gamma_+.
		\]
		Now, for $x \in E_-$:
		\[
		\delta_-\gamma_-(\delta_-(x)) = \delta_-(x) = i_-\delta_-(x)
		\]
		and for $x' \in E_+$:
		\[
		\delta_+\gamma_+(\delta_+(x')) = \delta_+(x) = i_+\delta_-(x').
		\]
		It remains to show that both compositions give us identities.
		
		$M(E)^{-1}M(E) = 1$ because it is the identity on $E_-, E_0$ and $E_+$.
		
		$M(E)M(E)^{-1}$ gives identity on $E_0$ and $\delta_i\gamma_i : \delta_i(E_i) \to \delta_i(E_i)$.
		It is an identity on $\delta_i(E_i)$ since $\delta_i\gamma_i(\delta_i(x)) = \delta_i(x)$.
		
		Thus, $M(E)$ is an isomorphism, $M$ is a natural isomorphism and $S$ and $T$ are indeed quasi-inverse equivalences of categories.
	\end{proof}

\bibliography{PSBibliography}

\end{document}